\documentclass[a4paper,10pt]{amsart}
\usepackage{ae,amsfonts,euscript,enumerate}
\usepackage{amsmath,euscript}
\usepackage{amssymb}
\usepackage{amsthm}
\usepackage{enumerate}
\usepackage{microtype}
\usepackage{amsfonts}
\usepackage{comment}
\usepackage{colortbl}
\usepackage{amssymb,amsmath,array}
%\usepackage{graphicx}
%\usepackage[usenames]{color}

%\setlength{\textwidth}{6in}

%\hoffset=-.45in

%\pdfpagewidth 8.5in
%\pdfpageheight 11.0in

\theoremstyle{plain}
\newtheorem{theorem}{Theorem}[section]
\newtheorem{lemma}[theorem]{Lemma}
\newtheorem{proposition}[theorem]{Proposition}

% Put your definitions, symbols, abbreviations here.
%\newcommand{}{}

\newcommand{\B}{\mathbb}
\newcommand{\C}{\mathcal}

\newcommand{\ga}{\alpha}

\begin{document}

\title[Addendum]{Addendum to: On the rational approximation of the sum of the reciprocals of the Fermat numbers}
\date{\today}
\author{Michael Coons}
\address{School of Mathematical and Physical Sciences\\
University of Newcastle\\
Australia}
\email{Michael.Coons@newcastle.edu.au}
\thanks{The research of M.~Coons was supported by ARC grant DE140100223.}

\begin{abstract}
As a corollary of the main result of our recent paper, {\em On the rational approximation of the sum of the reciprocals of the Fermat numbers} published in this same journal, we prove that for each integer $b\geq 2$ the irrationality exponent of $\sum_{n\geqslant 0} s_2(n)/b^n$ is equal to $2$.
\end{abstract}

\maketitle

%%%%%%%%%%%%%%%%%%%%%%%%%%%%%%%%%%%%%%%%%%%%%%%%%%%%%%%%
%%%%%%%%%%%%%%%%%%%%%%%%%%%%%%%%%%%%%%%%%%%%%%%%%%%%%%%%

\vspace{-.8cm}
%%%%%%%%%%%%%%%%%%%%%%%%%%%%%%%%%%%%%%%%%%%%%%%%%%%%%%%%%%%%%%%%%%%%%%%%%%%%%
\section{Introduction}
%%%%%%%%%%%%%%%%%%%%%%%%%%%%%%%%%%%%%%%%%%%%%%%%%%%%%%%%%%%%%%%%%%%%%%%%%%%%%

Let $\ga$ be a real number. The {\em irrationality exponent} $\mu(\ga)$ is defined as the supremum of the set of real numbers $\mu$ such that the inequality $$\left|\ga-\frac{p}{q}\right|<\frac{1}{q^\mu}$$ has infinitely many solutions $(p,q)\in\B{Z}\times\B{N}.$ For example, Liouville \cite{L1844} proved that $\mu(\sum_{n\geqslant 0}10^{-n!})=\infty$, and Roth \cite{R1955} showed that if $\ga$ is an irrational algebraic number, then $\mu(\ga)=2$. Note also that $\mu(\ga)\geqslant 2$ for all irrational $\ga$.

In this Addendum, we prove the following theorem.

\begin{theorem}\label{main} Let $s_2(n)$ be the sum of the binary digits of $n$. Then for each integer $b\geq 2$ we have $$\mu\left(\sum_{n\geqslant 0}\frac{s_2(n)}{b^n}\right)=2.$$
\end{theorem}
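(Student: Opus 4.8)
The plan is to reduce the statement to the main result of the original paper by writing $\sum_{n\geqslant 0} s_2(n)/b^n$ as a nonzero rational multiple of the Fermat-type series $\xi_b:=\sum_{k\geqslant 0}1/(b^{2^k}+1)$, and then transporting the irrationality exponent across that affine relation. First I would establish the digit-sum generating-function identity
\[
\sum_{n\geqslant 0} s_2(n)\,x^n=\frac{1}{1-x}\sum_{k\geqslant 0}\frac{x^{2^k}}{1+x^{2^k}}\qquad(|x|<1).
\]
This follows by writing $s_2(n)=\sum_{k\geqslant 0}\eps_k(n)$, where $\eps_k(n)$ is the $k$-th binary digit of $n$, and summing bit by bit. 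For fixed $k$ the sequence $(\eps_k(n))_{n\geqslant 0}$ is periodic of period $2^{k+1}$, consisting of a block of $2^k$ zeros followed by $2^k$ ones, so
\[
\sum_{n\geqslant 0}\eps_k(n)\,x^n=\frac{x^{2^k}(1-x^{2^k})}{(1-x)(1-x^{2^{k+1}})}=\frac{x^{2^k}}{(1-x)(1+x^{2^k})},
\]
where the simplification uses $1-x^{2^{k+1}}=(1-x^{2^k})(1+x^{2^k})$. Summing over $k$ gives the stated identity.

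Next I would specialize to $x=1/b$. Since $b\geqslant 2$ the series converges absolutely, and $\frac{1}{1-1/b}=\frac{b}{b-1}$ while $\frac{(1/b)^{2^k}}{1+(1/b)^{2^k}}=\frac{1}{b^{2^k}+1}$, so that
\[
\sum_{n\geqslant 0}\frac{s_2(n)}{b^n}=\frac{b}{b-1}\sum_{k\geqslant 0}\frac{1}{b^{2^k}+1}=\frac{b}{b-1}\,\xi_b .
\]
This exhibits the left-hand side as $q\,\xi_b$ with $q=b/(b-1)\in\mathbb{Q}\setminus\{0\}$, where $\xi_b$ is exactly the sum of reciprocals of the (generalized) Fermat numbers in base $b$ studied in the original paper (the classical case $b=2$ being $\sum_{k}1/F_k$).

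Finally I would invoke the elementary invariance of the irrationality exponent under nonzero rational scaling: if $q\in\mathbb{Q}\setminus\{0\}$ then $\mu(q\ga)=\mu(\ga)$, since every rational approximation $p/q'$ of $\ga$ converts into one of $q\ga$ with denominator changed only by a bounded factor, and conversely. Together with the main theorem of the original paper, which gives $\mu(\xi_b)=2$ for every integer $b\geqslant 2$, this yields $\mu\bigl(\sum_{n\geqslant 0}s_2(n)/b^n\bigr)=\mu(\xi_b)=2$. The only genuine content beyond the cited theorem is the digit-sum identity; the main point to verify carefully is that the cited result really furnishes the upper bound $\mu(\xi_b)\leqslant 2$ for \emph{every} base $b\geqslant 2$ (the lower bound $\mu\geqslant 2$ being automatic for irrational numbers), so that the affine reduction transfers the exact value rather than merely an inequality.
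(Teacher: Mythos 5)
Your proof is correct, and its overall skeleton coincides with the paper's: both reduce the theorem to the identity $\mathcal{S}(1/b)=\frac{b}{b-1}\,\mathcal{F}(1/b)$, where $\mathcal{S}(x)=\sum_{n\geqslant 0}s_2(n)x^n$ and $\mathcal{F}(x)=\sum_{k\geqslant 0}x^{2^k}/(1+x^{2^k})$, and then combine the invariance of the irrationality exponent under nonzero rational scaling with the cited result $\mu(\mathcal{F}(1/b))=2$ --- which Proposition \ref{IEF} does furnish for every integer $b\geqslant 2$, so the caveat at the end of your write-up is satisfied. Where you genuinely diverge is in how the identity $(1-x)\mathcal{S}(x)=\mathcal{F}(x)$ is established. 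The paper proceeds arithmetically: it observes that the coefficient sequence $f(n)$ of $\mathcal{F}$ is multiplicative, proves the lemma $f(n)=s_2(n)-s_2(n-1)$ by checking values at prime powers, and then telescopes the series. You instead expand $s_2(n)=\sum_{k\geqslant 0}\varepsilon_k(n)$ and sum each bit's generating function, using the periodicity of $\varepsilon_k$ (period $2^{k+1}$, a block of $2^k$ zeros followed by $2^k$ ones) to obtain $\sum_{n\geqslant 0}\varepsilon_k(n)x^n=x^{2^k}/\bigl((1-x)(1+x^{2^k})\bigr)$; this computation, including the factorization $1-x^{2^{k+1}}=(1-x^{2^k})(1+x^{2^k})$, is correct. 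Your bitwise route is more direct and self-contained, avoiding the introduction of $f$ and its multiplicativity entirely, at the cost of an interchange of the sums over $n$ and $k$ that you pass over silently (it is immediate here, since all terms are nonnegative for $0<x<1$, but deserves a word); the paper's route is slightly longer but yields the arithmetically interesting byproduct that the coefficients of $\mathcal{F}$ are exactly the first differences of the digit-sum sequence.
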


\noindent Transcendence of these numbers was proved by Toshimitsu \cite{T1998} using Mahler's method.

%%%%%%%%%%%%%%%%%%%%%%%%%%%%%%%%%%%%%%%%%%%%%%%%%%%%%%%%%%%%%%%%%%%%%%%%%%%%%
\section{Preliminaries}
%%%%%%%%%%%%%%%%%%%%%%%%%%%%%%%%%%%%%%%%%%%%%%%%%%%%%%%%%%%%%%%%%%%%%%%%%%%%%

As above, let $s_2(n)$ denote the sum of the binary digits of $n$, and set $\C{S}(x):=\sum_{n\geqslant 0} s_2(x)x^n.$ Note that for all $n\geqslant 0$, we have that both $s_2(2n)=s_2(n)$ and $s_2(2n+1)=s(n)+1.$ %so that $\C{S}(x)$ satisfies $$\frac{x}{1-x^2}-\C{S}(x)+(1+x)\C{S}(x^2)=0.$$ 
We prove our result by exploiting a connection between the sequences $\{s_2(n)\}_{n\geqslant 0}$ and $\{f(n)\}_{n\geqslant 1}$, where we define $f(n)$ by its generating series $$\C{F}(x):=\sum_{n\geqslant 1}f(n)x^n=\sum_{n=0}^\infty\frac{x^{2^n}}{1+x^{2^n}}.$$ The series $\C{F}(x)$ and its special values have been studied by many authors, including Golomb \cite{G1963}, Duverney \cite{D2001}, and Schwarz \cite{S1967}. This series is of special interest as $\C{F}(1/2)$ is the sum of the reciprocals of the Fermat numbers. Our interest here is tied to the following result.

\begin{proposition}[Coons \cite{C2012}]\label{IEF} Let $b\geqslant 2$ be a positive integer. Then $\mu(\C{F}(1/b))=2.$ 
\end{proposition}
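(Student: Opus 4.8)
The plan is to prove the two inequalities $\mu(\C{F}(1/b))\geq 2$ and $\mu(\C{F}(1/b))\leq 2$ separately. The lower bound is immediate: $\C{F}(1/b)$ is irrational (the approximations constructed below converge to it without ever equalling it), and every irrational real number has irrationality exponent at least $2$. So the entire content is the upper bound, and the engine for it is the Mahler-type functional equation obtained by splitting off the first term of the defining series,
$$\C{F}(x)=\frac{x}{1+x}+\C{F}(x^2).$$
Writing $\alpha:=\C{F}(1/b)$, the goal is to produce a single sequence of integer pairs $(p_n,q_n)$ with $q_n$ increasing, $\log q_{n+1}/\log q_n\to 1$, and
$$\frac{c_1}{q_{n+1}}\leq |q_n\alpha-p_n|\leq\frac{c_2}{q_{n+1}}$$
for absolute constants $0<c_1\le c_2$. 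Such a family forces $|\alpha-p_n/q_n|\asymp 1/(q_nq_{n+1})$ and a bounded value of the determinant $|p_nq_{n+1}-p_{n+1}q_n|$, so the $p_n/q_n$ behave exactly like consecutive convergents of $\alpha$; a standard criterion then gives $\mu(\alpha)=1+\limsup_n(\log q_{n+1}/\log q_n)=2$. I stress that the naive approximations coming from merely iterating the functional equation (truncating $\C{F}(1/b)$ at $x^{2^{N}}$) are useless here: their denominators square at each step, and $|q_N\alpha-p_N|$ stays bounded below by a positive constant, so they are order-one approximations spaced far too sparsely to pin down the continued fraction between the scales $b^{2^{N}}$ and $b^{2^{N+1}}$.

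The approximations I would actually use are Padé approximants of $\C{F}$: rational functions $A_n(x)/B_n(x)$ with $\deg A_n,\deg B_n\le n$ for which $B_n(x)\C{F}(x)-A_n(x)$ vanishes to order $2n+1$ at $x=0$. Specializing at $x=1/b$ and clearing denominators produces integers $p_n=b^nA_n(1/b)$ and $q_n=b^nB_n(1/b)$, and the order of contact translates into $B_n(1/b)\alpha-A_n(1/b)\asymp b^{-(2n+1)}$, whence $|q_n\alpha-p_n|\asymp b^{-(n+1)}\asymp 1/q_{n+1}$ as soon as $q_n\asymp b^n$. The functional equation is what makes these computable: from an approximant $(A,B)$ of contact order $d$ one checks that $(xB(x^2)+(1+x)A(x^2),\,(1+x)B(x^2))$ is an approximant of contact order $2d$, and the full Padé table is then assembled by interpolating between these doubling steps.

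The hard part is the non-degeneracy of the construction, which is precisely the input that distinguishes $\mu=2$ from a larger exponent. I would need to show, by induction along the recursion furnished by the functional equation, that: (i) $B_n(1/b)$ stays bounded away from $0$ and $\infty$ and $B_n$ has integer coefficients of size $b^{o(n)}$, so that indeed $q_n\asymp b^n$ and $\log q_{n+1}/\log q_n\to 1$; and, most importantly, (ii) the order of contact is exactly $2n+1$ and the resulting linear form is not anomalously small, i.e. $|q_n\alpha-p_n|\gg 1/q_{n+1}$. Here (ii) is the crux, since a single approximation better than order two would already push the exponent above $2$, so this must be ruled out uniformly. This amounts to the non-vanishing, together with a size estimate, of a sequence of Hankel/Casorati-type determinants built from the coefficients $f(n)$ of $\C{F}$ (which satisfy $f(n)=s_2(n)-s_2(n-1)$); I would establish it by expressing the determinant at level $2n$ in terms of that at level $n$ via the doubling relation above, checking the base cases explicitly, and verifying that no cancellation occurs. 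This determinantal non-vanishing is where essentially all of the work lies.
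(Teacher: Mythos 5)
Note first that this Addendum contains no proof of the proposition: it is imported wholesale as the main theorem of \cite{C2012}, so your attempt has to be measured against that paper. In outline you have reconstructed its skeleton correctly. The argument of \cite{C2012} adapts Bugeaud's proof that the Thue--Morse--Mahler numbers have irrationality exponent $2$: one builds Pad\'e-type approximants to $\C{F}(x)$ out of the functional equation $\C{F}(x)=\frac{x}{1+x}+\C{F}(x^2)$, specializes at $x=1/b$, and deduces $\mu=2$ from a dense family of two-sided linear-form estimates via the convergent-type criterion you invoke. Your doubling step for approximants is correct (if $B(x)\C{F}(x)-A(x)=O(x^d)$ then $(1+x)B(x^2)\C{F}(x)-xB(x^2)-(1+x)A(x^2)=(1+x)\bigl(B(x^2)\C{F}(x^2)-A(x^2)\bigr)=O(x^{2d})$), and your observation that the naive truncations square the denominator and are far too sparse is exactly the right diagnosis of why Pad\'e approximants are needed.

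The genuine gap sits precisely where you park it: step (ii), the non-degeneracy. You propose to prove nonvanishing of the Hankel determinants ``by expressing the determinant at level $2n$ in terms of that at level $n$ via the doubling relation\dots and verifying that no cancellation occurs,'' but no such determinant recursion follows formally from the doubling of approximants. Indeed, doubling a diagonal approximant of degree $n$ and contact $2n+1$ yields degree $2n+1$ and contact $4n+2$, which is one short of the $4n+3$ required of the diagonal Pad\'e approximant at that level; normality of the Pad\'e table --- equivalently, nonvanishing of every Hankel determinant of the sequence $f(n)$ --- is therefore not inherited and must be established by hand. In the Thue--Morse case this is the theorem of Allouche, Peyri\`ere, Wen and Wen (all Hankel determinants are odd), itself a lengthy multi-variable induction; for $\C{F}$ the analogous determinant evaluations modulo $2$, through an intricate system of interlocking recurrences, occupy the bulk of the twenty-seven pages of \cite{C2012}. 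Note also that your irrationality remark quietly uses the same input, since one needs $q_n\ga-p_n\neq 0$. Until that non-vanishing (or enough of it to keep $\log q_{n+1}/\log q_n\to 1$) is actually proved, your argument yields nothing beyond the trivial bound $\mu\geqslant 2$; with it, the rest of your outline does assemble into the proof given in \cite{C2012}.
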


To use Proposition \ref{IEF} to prove Theorem \ref{main}, we will use the relationship contained in the following lemma.

\begin{lemma} For all $n\geqslant 1$ we have $f(n)=s_2(n)-s_2(n-1)$.
\end{lemma}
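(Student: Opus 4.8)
The plan is to prove the equivalent generating-series identity $\C{F}(x) = (1-x)\,\C{S}(x)$ and then read off the coefficient of $x^n$ on each side. Since $s_2(0)=0$, telescoping gives
\[
(1-x)\,\C{S}(x) = \sum_{n\geqslant 0} s_2(n)x^n - \sum_{n\geqslant 1} s_2(n-1)x^n = \sum_{n\geqslant 1}\bigl(s_2(n)-s_2(n-1)\bigr)x^n,
\]
so once the identity is established, comparing the coefficient of $x^n$ for each $n\geqslant 1$ yields exactly $f(n)=s_2(n)-s_2(n-1)$.

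First I would use the digit recurrences $s_2(2n)=s_2(n)$ and $s_2(2n+1)=s_2(n)+1$ to obtain a functional equation for $\C{S}$. Splitting the defining sum over even and odd indices gives
\[
\C{S}(x) = (1+x)\,\C{S}(x^2) + \frac{x}{1-x^2},
\]
and multiplying through by $(1-x)$ turns this into
\[
(1-x)\,\C{S}(x) = (1-x^2)\,\C{S}(x^2) + \frac{x}{1+x};
\]
that is, the series $G(x):=(1-x)\,\C{S}(x)$ satisfies $G(x)=G(x^2)+\tfrac{x}{1+x}$ with $G(0)=0$. On the other hand, peeling off the $n=0$ term in the definition of $\C{F}$ shows at once that $\C{F}(x)=\C{F}(x^2)+\tfrac{x}{1+x}$ with $\C{F}(0)=0$.

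Since both $G$ and $\C{F}$ are formal power series with vanishing constant term solving $H(x)=H(x^2)+\tfrac{x}{1+x}$, they must coincide. Iterating gives $H(x)=\sum_{k=0}^{N-1}\tfrac{x^{2^k}}{1+x^{2^k}}+H(x^{2^N})$ for every $N$, and because $H(0)=0$ the summand $H(x^{2^N})$ contains no term of degree below $2^N$; fixing $n$ and taking $2^N>n$ shows that the coefficient of $x^n$ in $H$ equals the coefficient of $x^n$ in $\C{F}$. Hence $G(x)=\C{F}(x)$, which is the required identity. The only genuine obstacle is this uniqueness step, and the point to state carefully is precisely that for each fixed power $x^n$ only finitely many iterates contribute.

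As a cross-check I would verify the common closed form $f(n)=1-v_2(n)$, where $v_2(n)$ denotes the $2$-adic valuation of $n$: expanding $\tfrac{x^{2^k}}{1+x^{2^k}}=\sum_{m\geqslant 1}(-1)^{m-1}x^{m2^k}$ and collecting the pairs with $m2^k=n$ gives $f(n)=1-v_2(n)$, while adding $1$ to $n-1$ flips its $v_2(n)$ trailing ones to zeros and a single zero to a one, whence $s_2(n)-s_2(n-1)=1-v_2(n)$ as well. This direct computation furnishes an alternative proof that avoids the functional-equation machinery entirely.
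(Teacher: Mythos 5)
Your proof is correct, and it takes a genuinely different route from the paper's. The paper argues arithmetically and coefficientwise: it reads off from the generating series that $f$ is multiplicative, with $f(2^k)=1-k$ and $f(p^k)=1$ for odd primes $p$, then computes $v(n):=s_2(n)-s_2(n-1)$ directly from the binary recurrences (obtaining $v(2k+1)=1$ and $v(2^k(2\ell+1))=1-k$) and matches the two multiplicative functions on prime powers. You instead prove the generating identity $\C{F}(x)=(1-x)\C{S}(x)$ head-on: both sides satisfy the Mahler-type functional equation $H(x)=H(x^2)+\frac{x}{1+x}$ with $H(0)=0$, and your iteration argument correctly settles uniqueness --- you rightly flag that the only delicate point is that for fixed $n$, once $2^N>n$ the remainder $H(x^{2^N})$ cannot affect the coefficient of $x^n$. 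Your route buys two things: the identity $\C{F}(x)=(1-x)\C{S}(x)$, which the paper rederives \emph{from} the lemma in the proof of Theorem \ref{main}, emerges as the primary object, so the main proof would shorten to a one-line specialization at $x=1/b$; and you avoid the paper's appeal to multiplicativity of $f$, which the paper asserts the generating function ``implies'' without supplying detail. What the paper's route buys is brevity and no formal-power-series bookkeeping. Note finally that your closing cross-check --- $f(n)=1-v_2(n)$ by expanding each $\frac{x^{2^k}}{1+x^{2^k}}$ as an alternating series and collecting terms with $m2^k=n$, together with $s_2(n)-s_2(n-1)=1-v_2(n)$ from the carry analysis --- is itself a complete second proof, and arguably the cleanest of the three: the carry computation is exactly the paper's evaluation of $v(2^k(2\ell+1))$ in different clothing, while the series expansion replaces the unproved multiplicativity claim with a direct calculation.
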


\begin{proof} Note that the generating function for $\C{F}(x)$ implies that $f(n)$ is multiplicative, and on prime powers given by $$f(p^k)=\begin{cases} 1-k &\mbox{if $p=2$}\\ 1 &\mbox{if $p\neq 2$}.\end{cases}$$ Consider the function $v(n):=s_2(n)-s_2(n-1)$ for $n\geqslant 1$. %We will show that $v(n)=f(n)$.

To this end, note that if $n$ is odd, say $n=2k+1$, then $$v(2k+1)=s_2(2k+1)-s_2(2k)=s_2(k)+1-s_2(k)=1=f(2k+1).$$ If $n$ is even, say $n=2^k(2\ell+1)$, then \begin{align*} v(2^k(2\ell+1))&=s_2(2^k(2\ell+1))-s_2(2^k(2\ell+1)-1)\\
&=s_2(2\ell+1)-s_2(2^{k+1}\ell+2^{k}-1)\\ 
&=s_2(\ell)+1-s_2(2^{k+1}\ell)-s_2(2^{k}-1)\\
&=s_2(\ell)+1-s_2(\ell)-k\\
&=1-k.
\end{align*} From here it is easy to see that $v(n)$ is multiplicative and $f(p^k)=v(p^k)$ for all primes $p$ and integers $k\geqslant 1$. Thus $v(n)=f(n)$.
\end{proof}

%%%%%%%%%%%%%%%%%%%%%%%%%%%%%%%%%%%%%%%%%%%%%%%%%%%%%%%%%%%%%%%%%%%%%%%%%%%%%
\section{Proof of the main result}
%%%%%%%%%%%%%%%%%%%%%%%%%%%%%%%%%%%%%%%%%%%%%%%%%%%%%%%%%%%%%%%%%%%%%%%%%%%%%

%We will now prove Theorem \ref{main} as an application of Proposition \ref{IEF}; that is, we will show that for every $b\geqslant 2$ we have $\mu(\C{S}(1/b))=2.$ 

\begin{proof}[Proof of Theorem \ref{main}] Using the above lemma and the fact that $s_2(0)=0$, we have that \begin{align*} \C{F}(x)=\sum_{n\geqslant 1}f(n)x^n&=\sum_{n\geqslant 1}(s_2(n)-s_2(n-1))x^n=\sum_{n\geqslant 1}s_2(n)x^n-\sum_{n\geqslant 1}s_2(n-1)x^n\\
&=\sum_{n\geqslant 0}s_2(n)x^n-x\sum_{n\geqslant 0}s_2(n)x^n=(1-x)\C{S}(x),\end{align*} so that $\C{S}(1/b)=\frac{b}{b-1}\cdot\C{F}(1/b)$. Since $\C{S}(1/b)$ is a (nonzero) rational multiple of $\C{F}(1/b)$, they have the same irrationality exponent. Appealing to Theorem \ref{IEF} proves the theorem.
\end{proof}

\noindent{\em Remark.} Since $\C{F}(1/b)$ is a (nonzero) rational multiple of $\C{S}(1/b)$, the transcendence of $\C{S}(1/b)$ as proved by Toshimitsu \cite{T1998} provides an alternative proof (to that of Duverney \cite{D2001}) of the transcendence of $\C{F}(1/b)$.

%%%%%%%%%%%%%%%%%%%%%%%%%%%%%%%%%%%%%%%%%%%%%%%%%%%%%%%%%%%%%%%%%%%%%%%%%%%%%
%%%%%%%%%%%%%%%%%%%%%%%%%%%%%%%%%%%%%%%%%%%%%%%%%%%%%%%%%%%%%%%%%%%%%%%%%%%%%
\bibliographystyle{amsplain}
\providecommand{\bysame}{\leavevmode\hbox to3em{\hrulefill}\thinspace}
\providecommand{\MR}{\relax\ifhmode\unskip\space\fi MR }
% \MRhref is called by the amsart/book/proc definition of \MR.
\providecommand{\MRhref}[2]{%
  \href{http://www.ams.org/mathscinet-getitem?mr=#1}{#2}
}
\providecommand{\href}[2]{#2}

\end{document}